\documentclass[reqno]{amsart}
\usepackage{amssymb,amsmath,amsfonts,amsthm,graphicx,ifthen,mathrsfs}
\usepackage[mathcal]{euscript}
\usepackage{graphicx} 
\usepackage[dvipsnames]{xcolor} 
\usepackage{tikz}
\usepackage{float}
\usetikzlibrary{positioning}
\usepackage{xspace}  
\usepackage{textcase} 

\usepackage[hidelinks]{hyperref}

\theoremstyle{plain}
\newtheorem{Theorem}{Theorem} [section]
\newtheorem{Lemma}[Theorem] {Lemma}
\newtheorem{Corollary}[Theorem] {Corollary}

\theoremstyle{definition}
\newtheorem{Definition}[Theorem] {Definition}
\newtheorem{Construction}[Theorem] {Construction}
\newtheorem{Problem}[Theorem] {Problem}

\usepackage[capitalise, noabbrev]{cleveref}
\crefname{Construction}{Construction}{Constructions}
\Crefname{Construction}{Construction}{Constructions}
\crefname{Problem}{Problem}{Problems}
\Crefname{Problem}{Problem}{Problems}

\newcommand{\pak}{\ruleset{partizan arc kayles}}
\newcommand{\dom}{\ruleset{domineering}}
\newcommand{\snort}{\ruleset{snort}}

\newcommand{\strat}[1]{\ensuremath{\mathcal{#1}}\xspace}

\newcommand{\ruleset}[1]{\textup{\textsc{#1}}\xspace}

\usepackage{fullpage}

\newcommand{\tile}[2]{\fill[fill=gray] (#1,#2) rectangle (#1 + 1,#2 + 1);}

\begin{document}

\title{PA\NoCaseChange{rc}K trees can get as hot as you want}
\author{S. Huntemann, N. McKay, C. Tennenhouse}
\date{October 16, 2026}

\begin{abstract}

We consider the combinatorial game \pak as a generalization of \dom. An extensive technical search using genetic algorithms found hot positions of temperature exceeding $2$. We discuss this search and construct a family of positions with high temperature that includes these graphs and prove that \pak has unbounded temperature. We identify an infinite family of second-player win positions as part of our main construction.

\end{abstract}

\maketitle


\section{Introduction}\label{sec:Intro}

The combinatorial game \dom is played on a rectangular grid on which dominoes are placed alternately by the players, called Left and Right. Left places dominoes vertically; Right places dominoes horizontally. Upon being unable to play, that player loses. 

In a game of \dom, the board naturally breaks up into smaller components which are completely disjoint from each other. On their turn, a player then has to decide on which component to move and make their move there. 
	
A common question in combinatorial game theory is which of these components the next player should move in. Two tools used in this approach are the \textbf{game value} and the \textbf{temperature} of a component. The game value intuitively is how much of an advantage the winning player has in that component, while temperature indicates how urgent it is to make the first move in the component.

Most of the time, the two players will choose to move in a component with the highest temperature. Thus, both human and computer players look for components with high temperature, whether explicitly computing temperature (for the latter) or intuitively (for the former). As such, it is an advantage for a player if they know what  the highest possible temperature is in a game can be. The highest possible temperature in a game is also known as the \textbf{boiling point}. A search for a good move may be terminated when a component with temperature equal or close to the boiling point is found (a ``good enough'' position).

We introduce some background necessary for this paper within.
The reader, obviously interested in combinatorial game theory, can see also \cite{BerlekampCG2004,siegel2013combinatorial}.

A long-standing question first posed by Berlekamp is what temperatures occur in \dom (see for example \cite{guy1995unsolved,berlekamp2019}). Until recently it was conjectured that the boiling point is $2$. Drummond-Cole first published a position that indeed has temperature $2$ \cite{drummond2004temperature}, which is seen on the left in \cref{fig:dc}. Several other searches have found more positions with temperature $2$ \cite{shankar2005new,HuntemannM2026}. A search of positions assisted by AI very recently found a position of temperature $33/16=2+\frac{1}{16}$ \cite{mazur-2026-berlekamp-domineering-counterexample}, breaking the original conjecture, but still leaving the original question of which temperatures are possible open. The position is shown in \cref{fig:AI_high_temp}, and its temperature was independently verified by the authors of this paper. Note that this position contains the Drummond-Cole position, as is the case with every position of temperature $2$ previously found as well.

\begin{figure}[!h]
	\centering
	\begin{tikzpicture}[scale=0.8,every node/.style={circle,draw, draw=black,thin}]
		\tile{-3}{0}
		\tile{-3}{1}
		\tile{-3}{2}
		\tile{-2}{0}
		\tile{-2}{1}
		\tile{-2}{2}
		\tile{-2}{3}
		\tile{-2}{4}
		\tile{-1}{0}
		\tile{-1}{1}
		\tile{-1}{2}
		\tile{-1}{3}
		\tile{-1}{4}
		\tile{0}{0}
		\tile{0}{1}
		\tile{0}{2}
		\tile{1}{0}
		\tile{1}{4}
		\tile{1}{5}
		\tile{2}{0}
		\tile{3}{3}
		\tile{3}{4}
		\tile{4}{1}
		\tile{4}{3}
		\tile{4}{4}
		\tile{5}{0}
		\tile{5}{1}
		\tile{5}{2}
		
		\draw[step=1cm,black] (-3,0) grid (6, 6);
	\end{tikzpicture}
	\caption{A \textsc{domineering} position of temperature $33/16$}
	\label{fig:AI_high_temp}    
\end{figure}

At the Berlekamp Memorial Workshop on Combinatorial Games in 2019 at MSRI, a group of game theorists, including two of the authors, considered the question of whether the grid structure of a \dom board is at the root of \dom having a (potential) boiling point of $2$, with the question now updated to whether it has a finite boiling point.

To answer this question, we consider a generalization of \dom called \pak which can be played on any edge-colored graph. This game is also the partizan generalization of the impartial game \ruleset{ArcKayles} in which players take turns removing an edge and all incident edges. In another recent paper, the game was called \ruleset{Colored ArcKayles} \cite{Hanaka2026}. We prefer to use the adjective \emph{partizan} as is often done in combinatorial game theory.

\begin{Definition}\label{def:PAK}
	The ruleset \pak is played on a edge-colored graph. On their turn, Left deletes a blue edge and the two incident vertices, while Right deletes a red edge and the two incident vertices.
\end{Definition}

Every \dom game is also a game of \pak: given a \dom board, construct an edge-colored graph by replacing each square with a vertex, and add a blue (resp.\ red) edge between two vertices if they represent vertically (resp.\ horizontally) adjacent squares. One example of this equivalence is given in \cref{fig:dc}. As such, \pak is indeed a generalization of \dom.

\begin{figure}[!h]
	\centering
	\begin{tikzpicture}[scale=0.8,every node/.style={circle,draw, draw=black,thin}]
		\tile{0}{0}
		\tile{0}{1}
		\tile{0}{2}
		\tile{1}{0}
		\tile{1}{4}
		\tile{2}{0}
		\tile{3}{3}
		\tile{3}{4}
		\tile{4}{1}
		\tile{4}{3}
		\tile{4}{4}
		\draw[step=1cm,black] (0,0) grid (5, 5);
		
		\begin{scope}[shift={(8,0)}]
			\node (0a) at (0.5,3.5) {};
			\node (0b) at (0.5,4.5) {};
			\node (1a) at (1.5,1.5) {};
			\node (1b) at (1.5,2.5) {};
			\node (1c) at (1.5,3.5) {};
			\node (2a) at (2.5,1.5) {};
			\node (2b) at (2.5,2.5) {};
			\node (2c) at (2.5,3.5) {};
			\node (2d) at (2.5,4.5) {};
			\node (3a) at (3.5,0.5) {};
			\node (3b) at (3.5,1.5) {};
			\node (3c) at (3.5,2.5) {};
			\node (4a) at (4.5,0.5) {};
			\node (4b) at (4.5,2.5) {};
			\draw[blue,thick] (0a)--(0b) (1a)--(1b)--(1c) (2a)--(2b)--(2c)--(2d) (3a)--(3b)--(3c);
			\draw[red,thick] (0a)--(1c)--(2c) (1b)--(2b)--(3c)--(4b) (1a)--(2a)--(3b) (3a)--(4a);
		\end{scope}
	\end{tikzpicture}
	\caption{A temperature $2$ \textsc{domineering} position and its associated position in \pak}
	\label{fig:dc}    
\end{figure}

In \pak, in contrast to \dom, we do not have a restriction to a grid structure or even a bound on the maximum degree of a position. As such, one approach to answering the question about the grid structure of \dom being relevant to its boiling point is to determine the boiling point of \pak. We summarize our two main questions below.

\begin{Problem}\leavevmode
	\begin{enumerate}
		\item[(a)] What temperatures are possible in \dom ?
		\item[(b)] Does \pak have a finite boiling point?\label{prob:pak_boiling}
	\end{enumerate}	
\end{Problem}

\emph{Placement games} are games in which pieces are placed on the board without moving or removing them later in the game. We can think of \pak as a placement game if we consider removing an edge and its two endpoints instead as placing a piece on the edge and the two endpoints. \pak has the further property of being a \emph{strong} placement game \cite{FaridHN2019}, meaning that in any position the preceding moves could have been played in any order while still reaching the same value. We will state this fact as a lemma as we will be using it throughout our proofs later on.

\begin{Lemma}\label{thm:PAKstrong}
\pak is a strong placement game.
\end{Lemma}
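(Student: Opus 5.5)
Proving the lemma comes down to checking the definition of a strong placement game from \cite{FaridHN2019} against the rules of \pak. A placement game is strong when (i) the set of legal positions is closed under taking subsets of placed pieces (every position reachable from the empty board stays reachable when some pieces are removed), and (ii) whether a position is legal does not depend on the order in which the pieces were placed. Equivalently, in the language of \cite{FaridHN2019}, a position can be identified with a set of placed pieces, and a set of pieces forms a legal position exactly when every subset does, with no order dependence. I will reformulate \pak as a placement game and verify these two conditions.

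\textbf{Reformulation.} Take an edge-colored graph $G$. A Left (resp.\ Right) piece is a blue (resp.\ red) edge $e=uv$ of $G$. Placing it occupies the vertex set $\{u,v\}$. Deleting $u$ and $v$ from the graph is the same as marking them occupied. An edge $e$ then remains available precisely when neither endpoint is occupied. So a move in \pak is legal exactly when the chosen edge is disjoint from all previously chosen edges. The positions reachable from $G$ are therefore in bijection with pairs $(M_L,M_R)$, where $M_L$ is a set of blue edges, $M_R$ is a set of red edges, and $M_L\cup M_R$ is a matching of $G$. The remaining game is the subgraph of $G$ induced on the unmatched vertices.

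\textbf{Verification.} First, being a matching is a pairwise condition on the placed edges. Hence every subset of a legal placement is legal, which gives the hereditary property. Second, the resulting subgraph depends only on the set of covered vertices, not on the order of placement. So any ordering of a legal set of placed edges is a legal sequence of moves, and every ordering produces the same residual graph and hence the same game value. This is exactly the statement that the preceding moves could have been played in any order while reaching the same value.

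The only point needing care is matching the paper's informal phrasing to the formal definition in \cite{FaridHN2019}. That definition describes strong placement games as those whose legal positions form a simplicial complex, the \emph{legal complex}, with faces being sets of placed pieces. This step is not hard. Once \pak is encoded as matchings in $G$, the legal complex is the matching complex of $G$ restricted to the color constraints, which is visibly a simplicial complex. This completes the proof.
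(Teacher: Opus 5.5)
Your argument is correct. It also gives more than the paper does: the paper offers no proof of this lemma, only an attribution to \cite{FaridHN2019} and the gloss that ``the preceding moves could have been played in any order while still reaching the same value.''

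You instead check the definition directly. Identify a position with its set $M$ of placed edges. Each placement is legal exactly when the new edge is vertex-disjoint from all placed edges, so the reachable positions are exactly the matchings of $G$. Every ordering of a matching is therefore a legal move sequence, and the residual game $G - V(M)$ depends only on $M$.

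Two small remarks on phrasing. The colours impose no restriction on the complex, which is just the matching complex of the underlying graph; they only record which player owns each edge. Also, the formal condition in \cite{FaridHN2019} is the order condition itself, and the simplicial-complex structure follows from it. Your Verification paragraph already checks the order condition, so your closing paragraph about the legal complex is not needed for the proof.

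What your route buys is self-containment, together with exactly the features the paper relies on later:
\begin{itemize}
\item it never uses alternation of players, which matters when a player moves several times in a row in one summand;
\item it allows $G$ to be disconnected, so it covers sums of \pak positions such as $A_n - 1$ directly.
\end{itemize}
What the citation buys is the surrounding framework. In the proof of Theorem~\ref{thm:Tn=pmn} the paper also needs that sums of strong placement games are strong, and it treats $\pm n$ as \snort on $K_{1,n}$. That lies outside \pak and outside your matching argument, though it is not needed for the lemma as stated.
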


\medskip

In Section \ref{sec:Search}, we discuss our computer search. In Section \ref{sec:Construction}, we present a construction of infinite families of positions of values $0$ and $1$.
We resolve Problem~\ref{prob:pak_boiling}(b) in \cref{sec:High} with a construction of a family of \pak positions of unbounded temperature.
We end in \ref{sec:Conclusion} with a discussion of the relevance of this work and directions for further study.

\section{Computer search}\label{sec:Search}

Game temperature is a dyadic rational-valued function on the space of game positions, so it lends itself to optimization methods. In order to search for high temperature positions in \pak we utilized a purpose-built genetic algorithm written in CG Script running within CGSuite \cite{CGSuite}. While at the time CGSuite did not have a pseudorandom number generator, we wrote one that generated dyadic rationals between $0$ and $1$ and chose a seed on each run. From this pseudorandom number generator we were able to generate random integers and a host of other stochastic tools. Due to the lack of support for certain data structures, we modelled a \pak position with an upper-triangular grid as a chromosome for the genetic algorithm, representing the adjacency matrix of an edge-colored tree. Our crossover function consisted of swapping equal-sized rectangles from the corners of a pair of chromosomes, and mutation was a simple change in the position or color of a single ``stone''. While these functions are not directly modelled after phenotypic changes to \pak positions, they nonetheless represented genotypic changes sufficient for evolution for the sake of the objective function, game temperature. 

Through many hundreds of generations of evolution we saw a common structure emerge in the form of a node with two blue-red (or two red-blue) paths (see $A_2$ in \cref{fig:base_cases}). This structure also appears in the well-known \dom position of temperature $2$ (\cref{fig:dc}) at the top and right side. After running the simulation longer, a position of temperature $\frac{5}{2}$ appeared. From this position, we were able to generalize to the temperature $3$ position $T_3$ in \cref{fig:T3}. Seeing that this position could be recursively extended, we generated and tested positions of temperature $4$ and $5$, the construction of which we explain in \cref{sec:Construction}.

\section{Infinite families of numbers in \pak}\label{sec:A}

\begin{figure}[!ht]
	\scalebox{0.75}{
		\begin{tikzpicture}
			[main node/.style={circle,draw=black,fill=white,text=black},
			main edge from parent/.style={draw,thick,draw=blue},every label/.style={text=black}]
			\node[main node, label=left:{$p''$}] (root) {}
			child[main edge from parent] {node[main node, yshift=-0.5cm, label=left:{$p'$}] (mid) {}
				child[main edge from parent,red] {node[main node, yshift=-0.5cm, label=left:{$p$}] (rootA) {} 
					child[main edge from parent] {node[main node, yshift=-1cm] (a1) {$-A_1$}}
					child[main edge from parent] {node[main node, yshift=-1cm] (a2) {$-A_1$}}
					child[draw=white] {node[yshift=-1cm] (x) {\ldots}}
					child[main edge from parent] {node[main node, yshift=-1cm] (a3) {$-A_{n-1}$}}
					child[main edge from parent] {node[main node, yshift=-1cm] (a4) {$-A_{n-1}$}}
			}};
		\end{tikzpicture}
	}
	\caption{The construction of $C_n$}
	\label{fig:Cn}
\end{figure}

Consider the position shown in \cref{fig:Cn}. We ask the reader to consider one move for each player. Left's move in the top blue edge is to the subtree rooted at $p$, which we call $A_n$. 
Right can move in the red edge shown and he moves, for large $n$, to a large disjunctive sum of $-A_i$ of smaller indices. If $A_n \ge 0$ and enough of the $A_i$ are numbers and positive, then $C_n$ is hot. We will construct such a family of positions, $A_n$.

\subsection{Recursive Construction of the $A$ family of positions}\label{sec:Construction}

\begin{Construction}\label{def:An}
	Let $A_n$ be the \pak position composed of a root node $r$ and $2(n-1)$ blue edges from $p$ to the root vertex of each of $2$ copies of $-A_i$, where $-A_i$ is $A_i$ with the edge colors swapped, for  $i\in [1,n-1]$. Let $A_1$ be a single blue edge. Note that the depth of $A_n$ is $n$.
\end{Construction}

See \cref{fig:base_cases,fig:An} for the base cases and recursive construction from Definition \ref{def:An}. In an abuse of notation, we use $A_n$ in figures to represent the underlying graph that corresponds to the \pak position $A_n$. Later, $A_n$ may be read as being the value of the position. This ought to be clear from the context.
\begin{figure}[!ht]
	\scalebox{0.75}{
		\begin{tikzpicture}
			[level distance=10mm,
			every node/.style={circle,draw, draw=black,thin},
			edge from parent/.style={draw,thick},
			level 1/.style={sibling distance=40mm,edge from parent/.append style={blue}},
			level 2/.style={sibling distance=20mm,edge from parent/.append style={red}},
			level 3/.style={sibling distance=10mm,edge from parent/.append style={blue}}]
			\node {}
			child[xshift=0cm] {node (a) {}
			}
			;
		\end{tikzpicture}
		\qquad
		\begin{tikzpicture}
			[level distance=10mm,
			every node/.style={circle,draw, draw=black,thin},
			edge from parent/.style={draw,thick},
			level 1/.style={sibling distance=40mm,edge from parent/.append style={blue}},
			level 2/.style={sibling distance=20mm,edge from parent/.append style={red}},
			level 3/.style={sibling distance=10mm,edge from parent/.append style={blue}}]
			\node {}
			child[xshift=1cm] {node (a) {}
				child {node (e) {}}
			}
			child[xshift=-1cm] {node (b) {}
				child {node (f) {}}
			}
			;
		\end{tikzpicture}
		\qquad
		\begin{tikzpicture}
			[level distance=10mm,
			every node/.style={circle,draw, draw=black, thin},
			edge from parent/.style={draw,thick},
			level 1/.style={sibling distance=40mm,edge from parent/.append style={blue}},
			level 2/.style={sibling distance=20mm,edge from parent/.append style={red}},
			level 3/.style={sibling distance=10mm,edge from parent/.append style={blue}}]
			\node  {}
			child[xshift=4cm] {node  (a) {}
				child {node  (e) {}}
			}
			child[xshift=1cm] {node  (b) {}
				child {node  (f) {}}
			}
			child[xshift=-1.5cm]  {node  (c) {}
				child {node  (g) {}
					child {node  (k) {}}
				}
				child {node  (h) {}
					child {node  (l) {}}
				}
			}
			child[xshift=-2.5cm] {node  (d) {}
				child {node  (i) {}
					child {node  (m) {}}
				}
				child {node  (j) {}
					child {node  (n) {}}
				}
			};
		\end{tikzpicture}
	}
	\caption{The \pak\ game $A_1 + A_2 + A_3$}
	\label{fig:base_cases}
\end{figure}

\begin{figure}[!ht]
	\scalebox{0.75}{
		\begin{tikzpicture}
			[main node/.style={circle,draw=black,fill=white,text=black},
			main edge from parent/.style={draw,thick,draw=blue}]
			\node[main node, label=left:{$r$}] {}
			child[main edge from parent] {node[main node, yshift=-1cm] (a1) {$-A_1$}}
			child[main edge from parent] {node[main node, yshift=-1cm] (a2) {$-A_1$}}
			child[draw=white] {node[yshift=-1cm] (x) {\ldots}}
			child[main edge from parent] {node[main node, yshift=-1cm] (a3) {$-A_{n-1}$}}
			child[main edge from parent] {node[main node, yshift=-1cm] (a4) {$-A_{n-1}$}};
		\end{tikzpicture}
	}
	\caption{The recursive construction of $A_n$}
	\label{fig:An}
\end{figure}

\subsection{$A$ Strategy}

We prove later in this section that each $A_n$ has value $0$ or $1$.
The following strategy $\mathcal{S}$ proves Left (Right) wins going first on $A_n$ ($-A_n$) when $n$ is odd and that the previous-player wins when $n$ is even.

As noted, within the tree $A_n$ there are subtrees isomorphic to $-A_m$ for $0 \le m < n$. The strategy $\strat{S}$ is also defined for playing  \pak on  $-A_n$ for all $n$.

\begin{Definition}
Fix $n \ge 0$. The strategy $\strat{S}$ is defined when playing \pak on some graph $A_n$ or its negative $-A_n$. Let $W$ be $A_n$ or $-A_n$.

The strategy $\mathcal{S}$ describes play on followers of $W$ reached with at least one player following the strategy \strat{S} on $W$ and winning.

Suppose $n$ is even. The strategy $\strat{S}$ is defined on $W$ for either player playing second.

Now suppose $n$ is odd.
If $W$ is $A_n$ then the strategy $\strat{S}$ is defined for Left playing first or second.
If $W$ is $-A_n$ then the strategy $\strat{S}$ is defined for Right playing first or second.
\begin{description}

\item [When $n$ is even] The strategy \strat{S} is defined  for the second player to move.

Suppose the first player plays on the edge $e$.
The second player responds by removing an edge of their color closer to the root node at distance one from $e$, and incident with the largest subgame of the form $A_\ell$ or $-A_\ell$, as appropriate for the player.
 
We are now possibly playing on a sum of $A_i$s and $-A_i$ for various $i < n$ along with what is left of $W$.

In the sequel we usually respond locally to the first player. There are three cases:
\begin{enumerate}
\item If the first player now plays in some $A_i$ or $-A_i$ then we respond locally following \strat S if we can. 

\item If the first player now plays in the remnants of a previously played-in tree, then we play as if we were following \strat S and responding to that move had it been the first move played in that summand.

\item If the first player now plays adjacent to the root of some summand then we play adjacent to the root of some untouched summand that has our color adjacent to the root.
\end{enumerate}

\item[When $n$ is odd]
The strategy \strat{S} is defined for Left playing first or second on $A_n$ and Right playing first or second on $-A_n$.

We first consider the cases where the strategy \strat{S} is defined for the first player to move.
The strategy \strat{S} for Left playing first is for 
Left to delete an edge from the root to $-A_{n-1}$.
The strategy \strat{S} for Right playing first is for 
Right to delete an edge from the root to $A_{n-1}$.
For either player playing second, the strategy $\strat{S}$ is as defined above in the case where $n$ is even.
\end{description}

\end{Definition}

In what follows, we use $W$ to denote some $A_n$ or $-A_n$ when the sign and index are not relevant.

The main theorem of this section, Theorem \ref{thm:Tn=pmn}, is a proof that this is a winning strategy for the conditions described. Also, Theorem \ref{thm:Tn=pmn} proves that the strategy proves that the strategy $\strat{S}$ is well-defined.

Figure \ref{fig:AnExtended} shows a subtree  a Right move, and a Left response. 

In Figure 5 the Edge $xy$ is red with $x$ the root of a subtree isomorphic to $-A_k$.

Figure 5 shows Right deleting the edge $e = xy$. This Figure captures a general situation with the exceptions that it not not represent the very simple game $A_1$ (which has no red edges); nor does it represent Right going first on some $-A_n$ and playing adjacent to the root.

To prove some Lemmas we want to be able to describe precisely the position of edges in some $W = A_n$ or $-A_n$.

We now introduce terminology to reference parts of a tree in respect to the strategy $\mathcal{S}$ and seen in Figure \ref{fig:AnExtended}. 

Let the distance from the root to $x$ be $d$ ($0 \le d \le n-1$).

The vertex $x$ is the root of a subtree isomorphic to some $A_k$ or some $-A_k$ (as shown in Figure 5).

The vertex $y$ is the root of a subtree isomorphic to some $-A_j$ or some $A_j$ (as shown in Figure 5).

Of particular interest to us is the value of the forest disconnected from the root of $W$ upon a move by either player and the opponent's response following the strategy $\mathcal{S}$. We denote this by $Q(W,e)$, where $e$ is the first edge played in $W$ and $e$ is not adjacent to the root.

The constraints on the variables that describe the position of the edge $e$ in some $W$ are:
\begin{itemize}
	\item $0 \le j \le k -1 < \ell - 1 < n$
	\item $ d \le n - \ell + 1$
\end{itemize}

\begin{figure}[h]
	\centering
	
	\begin{tikzpicture}[scale=1, every node/.style={draw, circle, minimum size=0.2cm, inner sep=1pt}]
		\node[label=left:{$r_e$}] (r) at (0,0) {};		
		\node (A1) at (-2,-2) {};
		\node[below=0.01cm of A1, draw=none] {$-A_1$};
		\node (A2) at (-1,-2) {};
		\node[below=0.01cm of A2, draw=none] {$-A_1$};
		\node[draw=none] at (-0.5, -2) {$\cdots$};
		\node (x) at (0,-2) {};
		\node[right=0.1cm of x, draw=none] {$x$};
		\node (An-1) at (3,-2) {};
		\node[right=0.01cm of An-1, draw=none] {$-A_{\ell-1}$};
		\node[draw=none] at (1.5, -2) {$\cdots$};
		
		\node (A1-x) at (-2,-4) {};
		\node[below=0.01cm of A1-x, draw=none] {$A_1$};
		\node (A1-x2) at (-1.25,-4) {};
		\node[below=0.01cm of A1-x2, draw=none] {$A_1$};
		\node (y) at (0,-4) {};
		\node[above left=0.1cm of y, draw=none] {$y$};
		\node (A_k-1) at (1,-4) {};
		\node[below=-0.15cm of A_k-1, draw=none] {$A_{k-1}$};
		\node[draw=none] at (-0.5,-4) {$\cdots$};
		\node[draw=none] at (0.5,-4) {$\cdots$};    
		
		\node (A1-y1) at (-1,-6) {};
		\node[below left=0.01cm of A1-y1, draw=none] {$-A_1$};
		\node (A1-y2) at (-0.15,-6) {};
		\node[below left=0.01cm of A1-y2, draw=none] {$-A_1$};
		\node (A_j-1) at (1,-6) {};
		\node[below right=-0.12cm of A_j-1, draw=none] {$-A_{j-1}$};
		\node[draw=none] at (0.5,-6) {$\cdots$};    
		
		\node (bA1-y1) at (2,-4) {};
		\node[below=0.01cm of bA1-y1, draw=none] {$A_1$};
		\node (bA1-y2) at (2.5,-4) {};
		\node[below=0.01cm of bA1-y2, draw=none] {$A_1$};
		\node (bA_j-1) at (3.5,-4) {};
		\node[below=-0.15cm of bA_j-1, draw=none] {$A_{\ell-2}$};
		\node[draw=none] at (3,-4) {$\cdots$};    
		
		\draw[red, dashed, line width=1pt] ([yshift=0.5cm]r.center) -- (r);
		
		\draw[blue] (r) -- (A1);
		\draw[blue] (r) -- (A2);
		\draw[blue] (r) -- (x);
		\draw[blue, line width=2pt] (r) -- (An-1);
		
		\draw[red] (x) -- (A1-x);
		\draw[red] (x) -- (A1-x2);
		\draw[red, line width=2pt] (x) -- (y);
		\draw[red] (x) -- (A_k-1);
		
		\draw[blue] (y) -- (A1-y1);
		\draw[blue] (y) -- (A1-y2);
		\draw[blue] (y) -- (A_j-1);
		
		\draw[red] (An-1) -- (bA1-y1);
		\draw[red] (An-1) -- (bA1-y2);
		\draw[red] (An-1) -- (bA_j-1);    
		
	\end{tikzpicture}
	\begin{tikzpicture}[scale=1, every node/.style={draw, circle, minimum size=0.2cm, inner sep=1pt}]
	\node[label=left:{$r_e$}] (r) at (0,0) {};
	\node (A1) at (-2,-2) {};
	\node[below=0.01cm of A1, draw=none] {$A_1$};
	\node (A2) at (-1,-2) {};
	\node[below=0.01cm of A2, draw=none] {$A_1$};
	\node[draw=none] at (-0.5, -2) {$\cdots$};
	\node (x) at (0,-2) {};
	\node[right=0.1cm of x, draw=none] {$x$};
	\node (An-1) at (3,-2) {};
	\node[right=0.01cm of An-1, draw=none] {$A_{\ell-1}$};
	\node[draw=none] at (1.5, -2) {$\cdots$};
	
	\node (A1-x) at (-2,-4) {};
	\node[below=0.01cm of A1-x, draw=none] {$-A_1$};
	\node (A1-x2) at (-1.25,-4) {};
	\node[below=0.01cm of A1-x2, draw=none] {$-A_1$};
	\node (y) at (0,-4) {};
	\node[above left=0.1cm of y, draw=none] {$y$};
	\node (A_k-1) at (1,-4) {};
	\node[below=-0.15cm of A_k-1, draw=none] {$-A_{k-1}$};
	\node[draw=none] at (-0.5,-4) {$\cdots$};
	\node[draw=none] at (0.5,-4) {$\cdots$};    
	
	\node (A1-y1) at (-1,-6) {};
	\node[below left=0.01cm of A1-y1, draw=none] {$A_1$};
	\node (A1-y2) at (-0.25,-6) {};
	\node[below left=0.01cm of A1-y2, draw=none] {$A_1$};
	\node (A_j-1) at (1,-6) {};
	\node[below right=-0.12cm of A_j-1, draw=none] {$A_{j-1}$};
	\node[draw=none] at (0.5,-6) {$\cdots$};
	
	\node (bA1-y1) at (2,-4) {};
	\node[below=0.01cm of bA1-y1, draw=none] {$-A_1$};
	\node (bA1-y2) at (3,-4) {};
	\node[below=0.01cm of bA1-y2, draw=none] {$-A_1$};
	\node (bA_j-1) at (4.5,-4) {};
	\node[below=-0.15cm of bA_j-1, draw=none] {$-A_{\ell-2}$};
	\node[draw=none] at (3.75,-4) {$\cdots$};    
	
	\draw[blue, dashed, line width=1pt] ([yshift=0.5cm]r.center) -- (r);
			
	\draw[red] (r) -- (A1);
	\draw[red] (r) -- (A2);
	\draw[red] (r) -- (x);
	\draw[red, line width=2pt] (r) -- (An-1);
	
	\draw[blue] (x) -- (A1-x);
	\draw[blue] (x) -- (A1-x2);
	\draw[blue, line width=2pt] (x) -- (y);
	\draw[blue] (x) -- (A_k-1);
	
	\draw[red] (y) -- (A1-y1);
	\draw[red] (y) -- (A1-y2);
	\draw[red] (y) -- (A_j-1);
	
	\draw[blue] (An-1) -- (bA1-y1);
	\draw[blue] (An-1) -- (bA1-y2);
	\draw[blue] (An-1) -- (bA_j-1);    
	
\end{tikzpicture}
	
	\caption{Left's response to Right playing $e = xy$ under the strategy $\mathcal{S}$ in the game $A_n$, played on some graph $W$. The figure shows the subgraph of $W$ rooted at $r_e$.
	The dashed line indicates the connection of this subtree up towards the root of $W$, namely $r$.
	}
	\label{fig:AnExtended}
\end{figure}

\begin{figure}[h]
	\centering
	
	\begin{tikzpicture}[scale=1, every node/.style={draw, circle, minimum size=0.2cm, inner sep=1pt}]
		\node[label=left:{$r_e$}] (r) at (0,0) {};
		\node (A1) at (-2,-2) {};
		\node[below=0.01cm of A1, draw=none] {$-A_1$};
		\node (A2) at (-1,-2) {};
		\node[below=0.01cm of A2, draw=none] {$-A_1$};
		\node[draw=none] at (-0.5, -2) {$\cdots$};

		\node (An-1) at (3,-2) {};
		\node[right=0.01cm of An-1, draw=none] {$(-A_{\ell-1})$};
		\node[draw=none] at (1.5, -2) {$\cdots$};
		
		\node (A1-x) at (-2,-4) {};
		\node[below=0.01cm of A1-x, draw=none] {$A_1$};
		\node (A1-x2) at (-1.25,-4) {};
		\node[below=0.01cm of A1-x2, draw=none] {$A_1$};

		\node (A_k-1) at (1,-4) {};
		\node[below=-0.15cm of A_k-1, draw=none] {$A_{k-1}$};
		\node[draw=none] at (-0.5,-4) {$\cdots$};
		\node[draw=none] at (0.5,-4) {$\cdots$};    
		
		\node (A1-y1) at (-1,-6) {};
		\node[below left=0.01cm of A1-y1, draw=none] {$(-A_1)$};
		\node (A1-y2) at (-0.25,-6) {};
		\node[below=-0.2cm of A1-y2, draw=none] {$(-A_1)$};
		\node (A_j-1) at (1,-6) {};
		\node[below right=-0.12cm of A_j-1, draw=none] {$(-A_{j-1})$};
		\node[draw=none] at (0.5,-6) {$\cdots$};    
		
		\node (bA1-y1) at (2,-4) {};
		\node[below=0.01cm of bA1-y1, draw=none] {$A_1$};
		\node (bA1-y2) at (2.5,-4) {};
		\node[below=0.01cm of bA1-y2, draw=none] {$A_1$};
		\node (bA_j-1) at (3.5,-4) {};
		\node[below=-0.15cm of bA_j-1, draw=none] {$A_{\ell-2}$};
		\node[draw=none] at (3,-4) {$\cdots$};    
		
		\draw[red, dashed, line width=1pt] ([yshift=0.5cm]r.center) -- (r);
		
		\draw[blue] (r) -- (A1);
		\draw[blue] (r) -- (A2);

		\draw[blue, line width=2pt] (r) -- (An-1);
		
		\draw[red] (An-1) -- (bA1-y1);
		\draw[red] (An-1) -- (bA1-y2);
		\draw[red] (An-1) -- (bA_j-1);    
		
	\end{tikzpicture}
	\begin{tikzpicture}[scale=1, every node/.style={draw, circle, minimum size=0.2cm, inner sep=1pt}]		
		\node (A1) at (-2,-2) {};
		\node[below=0.01cm of A1, draw=none] {$-A_1$};
		\node (A2) at (-1,-2) {};
		\node[below=0.01cm of A2, draw=none] {$-A_1$};
		\node[draw=none] at (-0.5, -2) {$\cdots$};
		
		\node (A1-x) at (-2,-4) {};
		\node[below=0.01cm of A1-x, draw=none] {$A_1$};
		\node (A1-x2) at (-1.25,-4) {};
		\node[below=0.01cm of A1-x2, draw=none] {$A_1$};

		\node (A_k-1) at (1,-4) {};
		\node[below=-0.15cm of A_k-1, draw=none] {$A_{k-1}$};
		\node[draw=none] at (-0.5,-4) {$\cdots$};
		\node[draw=none] at (0.5,-4) {$\cdots$};    
		
		\node (A1-y1) at (-1,-6) {};
		\node[below left=0.01cm of A1-y1, draw=none] {$(-A_1)$};
		\node (A1-y2) at (-0.25,-6) {};
		\node[below=-0.2cm of A1-y2, draw=none] {$(-A_1)$};
		\node (A_j-1) at (1,-6) {};
		\node[below right=-0.12cm of A_j-1, draw=none] {$(-A_{j-1})$};
		\node[draw=none] at (0.5,-6) {$\cdots$};    
		
		\node (bA1-y1) at (2,-4) {};
		\node[below=0.01cm of bA1-y1, draw=none] {$A_1$};
		\node (bA1-y2) at (2.5,-4) {};
		\node[below=0.01cm of bA1-y2, draw=none] {$A_1$};
		\node (bA_j-1) at (3.5,-4) {};
		\node[below=-0.15cm of bA_j-1, draw=none] {$A_{\ell-2}$};
		\node[draw=none] at (3,-4) {$\cdots$};

	\end{tikzpicture}
	
	\caption{The game position after Right plays, then after Left responds (Or we just use the first image, then say that all visible edges here are removed after Left's subsequent response)}
	\label{fig:AnMoves}
\end{figure}

\subsection{The value of $A_n$ is $0$ or $1$ and has the same parity as $n$.}\label{sec:proof}
\leavevmode

\smallskip

When playing on some $W$ using the strategy $\strat{S}$, particular sums of games appear and they simplify greatly.
Next we present some algebra that is helpful when we consider the value of each of these positions.

\begin{Lemma}\label{lem:mod2sum}
Let $m_i = i\pmod 2$ for $n\geq 0$. Then $2\sum_{i=1}^{k} m_i = k + m_k$.
\end{Lemma}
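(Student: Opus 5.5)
Since $m_i\in\{0,1\}$ records whether $i$ is odd, the sum $\sum_{i=1}^{k} m_i$ counts the odd integers in $[1,k]$, which is $\lceil k/2\rceil$. So the statement amounts to $2\lceil k/2\rceil = k + m_k$. I would prove this by induction on $k$, with a direct parity check as a sanity cross-check. (The hypothesis ``for $n\ge 0$'' should read ``for $k\ge 0$''; the index plays no other role.)

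\emph{Base cases.} For $k=0$ the sum is empty, so the left side is $0$, and the right side is $0+m_0=0$. For $k=1$ the left side is $2m_1=2$ and the right side is $1+m_1=2$.

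\emph{Inductive step.} Assume $2\sum_{i=1}^{k} m_i = k+m_k$. Then
\[
2\sum_{i=1}^{k+1} m_i = k + m_k + 2m_{k+1}.
\]
We need this to equal $(k+1)+m_{k+1}$, which is equivalent to $m_k + m_{k+1} = 1$. That holds because consecutive integers have opposite parity, so exactly one of $m_k, m_{k+1}$ equals $1$.

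\emph{Direct check.} If $k=2t$, the odd numbers in $[1,k]$ are $1,3,\dots,2t-1$, so the sum is $t$ and $2t = k + 0$. If $k=2t+1$, the sum is $t+1$ and $2t+2 = k + 1$.

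There is no real obstacle here. The only point needing care is the identity $m_k+m_{k+1}=1$ in the inductive step, together with the edge case $k=0$ if the lemma is to be applied when a family of summands is empty, for instance when $j=0$ in the constraints on the position of $e$.
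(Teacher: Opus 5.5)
Your proof is correct. Its ``direct check'' paragraph is exactly the paper's argument: split on the parity of $k$ and count the odd indices in $[1,k]$. The induction via $m_k+m_{k+1}=1$ is an equally valid repackaging of the same parity fact. You are also right that the quantifier ``$n\ge 0$'' in the statement should read $k\ge 0$.
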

\begin{proof}
If $k$ is even, then $2\sum_{i=1}^{k} m_i = 2\sum_{i=1}^{k/2}m_{2i-1} =2\sum_{i=1}^{k/2}1 = k$.

If $k$ is odd, then $2\sum_{i=1}^{k} m_i = 2\sum_{i=1}^{\frac{k+1}{2}}m_{2i-1} = 2\sum_{i=1}^{\frac{k+1}{2}}1 = k+1 = k+m_k$.
\end{proof}

\begin{Theorem}\label{thm:AnValue}
The value of the \pak position $A_n$ for $n \ge 0$ is as follows:

\[
A_n = 
\begin{cases}
0 &\text{if } n \equiv 0 \pmod{2}\\
1 &\text{if } n \equiv 1 \pmod{2}
\end{cases}.
\]
\end{Theorem}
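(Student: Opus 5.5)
We argue by strong induction on $n$, proving simultaneously that $A_n = m_n$ and $-A_n = -m_n$, where $m_n = n \bmod 2$ as in \cref{lem:mod2sum}. The base cases are immediate: $A_0$ is the empty graph, so $A_0=0$, and $A_1$ is a single blue edge, so $A_1 = \{0\mid\,\}=1$. For the inductive step we do not try to compute all options of $A_n$ directly, because moves deep inside the tree produce awkward forests. Instead we show that $A_n - m_n$ is a second-player win, writing $-m_n$ as either nothing or a single red edge. The second player follows the strategy $\strat{S}$ in $A_n$, and answers a Left move in $A_n$ by playing the lone red edge when $n$ is odd.

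The first computational step is to evaluate moves adjacent to the root, since every line of $\strat{S}$ reduces to such a move. If Left deletes the blue edge from $r$ to the root of a copy of $-A_i$, the position falls apart into the following pieces:
\begin{itemize}
\item the other $-A_j$ subtrees hanging from $r$;
\item the children $A_1,A_1,\dots,A_{i-1},A_{i-1}$ of the deleted root of $-A_i$.
\end{itemize}
By induction all of these are numbers. Using \cref{lem:mod2sum}, the resulting value is
\[
-(n-1+m_{n-1}) + m_i + (i-1+m_{i-1}) = i-n+1-m_{n-1}.
\]
This is maximized at $i=n-1$, giving $-m_{n-1}$, which equals $0$ for $n$ odd and $-1$ for $n$ even. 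This is exactly why $\strat{S}$ has Left open at an edge to $-A_{n-1}$ when $n$ is odd. The same count, with colours swapped, handles moves one level deeper, as in \cref{fig:AnExtended}. There we must compute the value of the forest $Q(W,e)$ disconnected after a move $e=xy$ and the prescribed reply at $r_e$ toward the largest sibling subtree $\mp A_{\ell-1}$. We expect $Q(W,e)$ to come out as a sum of $\pm m_i$ that telescopes, again via \cref{lem:mod2sum}, to $0$ or to a value favourable to the responder. The constraints $j\le k-1<\ell-1$ are what make the favourable sign work.

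The second step is to show that $\strat{S}$ is well defined and winning on the whole sum. After the first exchange, the position is the remnant of $W$ together with a disjunctive sum of smaller $\pm A_i$. Each of these has a known integer value by induction, and on each of them $\strat{S}$ is winning for the appropriate player by the inductive hypothesis. We handle the three cases of $\strat{S}$ (a move in an untouched summand, a move in a previously played summand, and a move adjacent to a root) using \cref{thm:PAKstrong}. Because \pak is a strong placement game, we may reorder the moves played so far so that each summand looks like a fresh game in which $\strat{S}$ was followed from the start. Parity bookkeeping then shows that the responder never runs out of replies.

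The main obstacle is this global bookkeeping: checking that the prescribed local reply always exists and that the accumulated integer surplus never goes negative for the responder. The delicate case is a move adjacent to a root, where the reply goes in a different, untouched summand; there we must count that enough untouched summands with the right colour at the root remain. We would first try to make this precise by tracking the total value $\sum \pm m_i$ of the untouched summands. We would verify that each exchange of $\strat{S}$ preserves the invariant that this total, plus the remnant, is $\le 0$ from the mover's side. That invariant forces the first player to move last and lose.
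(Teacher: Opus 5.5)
Your route is the paper's. You induct on $n$, show $A_n-m_n$ is a second-player win with the responder following $\mathcal{S}$, evaluate each forest $Q(W,e)$ with Lemma~\ref{lem:mod2sum}, and reorder moves with Lemma~\ref{thm:PAKstrong}. Your value $i-n+1-m_{n-1}$ for a Left move at the root is correct. But the proposal stops exactly where the proof has to happen. The value of $Q(W,e)$ is only ``expected'', and the global argument is something you ``would verify''. So nothing beyond the root moves is actually established.

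The missing computation is short. Suppose Right deletes $e=xy$, where $r_e,x,y$ are the roots of copies of $A_\ell,-A_k,A_j$, so $j\le k-1<\ell-1$. Suppose Left replies by deleting the edge from $r_e$ to a copy of $-A_{\ell-1}$. Then the disconnected forest has value
\[
\begin{aligned}
Q(W,e)&=\Bigl(-2\sum_{i=1}^{\ell-1}A_i+A_k+A_{\ell-1}\Bigr)+\Bigl(2\sum_{i=1}^{k-1}A_i-A_j\Bigr)+2\sum_{i=1}^{\ell-2}A_i-2\sum_{i=1}^{j-1}A_i\\
&=(k-j-1)+A_{\ell-2}\ \ge\ m_\ell,
\end{aligned}
\]
using $2\sum_{i=1}^{m}A_i=m+A_m$ and $A_m+A_{m-1}=1$; blue moves are symmetric. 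The paper then totals these contributions only after an exchange has removed the root of $W$, so nothing is still attached to it. The exchange at the root alone contributes at least $A_{n-2}=m_n$, which pays for the $-m_n$, and every other contribution is nonnegative.

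The invariant you propose instead, the total of the untouched summands plus ``the remnant'', cannot be checked as stated. The remnant still attached to the root of $W$ is a pruned copy of $A_n$, not one of the $\pm A_i$, and your induction hypothesis gives no value for it. That is exactly why the paper postpones the accounting until the root is gone. Relatedly, your hypothesis records only values, yet you later use ``$\mathcal{S}$ is winning on each summand by the inductive hypothesis''. As in the paper, the claim that $\mathcal{S}$ wins must itself be part of the induction.

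Finally, the extra red edge is misassigned. As written, Right spends it answering ``a Left move in $A_n$''. It may only be used when $\mathcal{S}$ supplies no local reply, e.g.\ when Left plays an edge at the root of $A_n$. Spent on an ordinary move it loses. In $A_3-1$, suppose Left deletes one of the four pendant blue edges and Right takes the red edge. Left then deletes the other pendant blue edge in the same copy of $-A_2$. From that position every Right move lets Left reach a position of value at least $0$, so Right loses.
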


\begin{proof}
Let $p(n)$ be $0$ if $n$ is even and $1$ if $n$ is odd.

We proceed using induction:
for $n\geq 0$, 
\begin{itemize}
\item the \pak position $A_n$ has value $p(n)$
\item $A_n - p(n)$ is a Previous-player win following the strategy $S$.
\end{itemize}

The base cases of $n = 0,1,2,3$ are all true by inspection. Assume the claim is true for all $A_n$ with $0 < n < m$ for some integer $m$. Note that when $n<m$:

\begin{enumerate}
	\item $ 2\sum_{i=0}^n A_i = 2\sum_{i=1}^n A_i = n+A_n$ (by Lemma \ref{lem:mod2sum})
	\item $A_n+A_{n-1} = 1$, for $n \ge 1$ 
\end{enumerate}

Let $n=m$.

We consider play with each player moving first.

Again, $Q(W,e)$ is the disconnected sum after the move in the edge $e$ and the response following the strategy $\strat{S}$.

Using our assumptions, $Q(W,e)$ simplifies. When $e$ is red it simplifies as follows:
\begin{align*}
Q(W,e) &= (-2\sum_{i=1}^{\ell-1}A_i + A_k + A_{\ell-1}) + (2\sum_{i=1}^{k-1}A_i - A_j) + (2\sum_{i=1}^{\ell-2}A_i) + (-2\sum_{i=1}^{j-1}A_i) \\
&=(-(\ell-1)-A_{\ell-1}+A_k+A_{\ell-1})  + (k-1+A_{k-1} - A_j)  \\
& + (\ell-2+A_{\ell-2})  + (-(j-1)-A_{j-1}) \\
&= -\ell+1+(A_{k-1}+A_k)+k-1-(A_{j-1}+A_j)+\ell-2-j+1+A_{\ell-2} \\
&= -\ell+1+(1)+k-1-(1)+\ell-2-j+1 + A_{\ell-2}\\
&= (k-j-1) + A_{\ell-2}.
\end{align*}

A symmetric argument shows that when $e$ is blue,
\[Q(W,e) = -(k-j-1 +A_{\ell-2}) = -(k-j-1) - A_{\ell-2}.\]

The value of $Q(W,e)$ is, by our inductive assumption, a number and an integer. Furthermore, in this construction $k-j -1 \ge 0 $. In summary,

if $e$ is red, $ Q(W,e) = k-j-1 +A_{\ell-2} \ge A_{\ell-2}$;

if $e$ is blue, $ Q(W,e) = 	-(k-j-1) -A_{\ell-2}  \le -A_{\ell-2}$.

Let $r*$ denote the root of $A_n$.
For any edge $e$, let $r_e$ denote the vertex one closer to $r*$ above $x$ in the tree, as shown in Figure \ref{fig:AnExtended}. No matter the distance from $r*$ to $r_e$, after playing $e$ and $\mathcal{S}(e)$, the vertex $r_e$ is removed from the resulting subtree (possibly empty) of $W$ still connected to $r*$; 
this subtree of $W$ is the same as would be reached by Left playing any blue edge adjacent to $r_e$ in $W$.

We first consider the case where Right plays first. Later we consider when Left plays first. 

\begin{description}
\item[Right moving first]

With Left following the strategy $\strat{S}$, 
after some even number of moves Left plays near the root.
For $n \ge 2$ in $A_n$, there are two blue edges adjacent to the root that are incident to pendant red edges. This assures that there will always a move for Right and subsequently a response for Left until Left plays incident with the root.
Let $t$ be the total number of Right moves (including the Right move playing distance $1$ from the root); this is also the number of Left responses. That is, Right's most recent move is in the edge $e_{t}$.

By Lemma \ref{thm:PAKstrong}, at the end of a play the moves played (and not the order in which moves were played) determine the position reached.

We now argue that after Left's response to Right's move $e_t$ at distance $1$ from the root, Left wins the remaining sum.
If Right started with this move, that is, $t = 1$, then the remaining sum is exactly $Q(W,e_1)$ and our argument is complete by induction.

Otherwise, we consider each of the $t$ pairs of moves (each Right then Left) in a breadth-first manner.
After each pair of moves starting with Right deleting some $e_i$ (where $1 \le i \le t$), which is red, and with Left following $\strat{S}$ and deleting a blue edge, the value of the components not connected to the root changes by $Q(A_{\pi(i)},e_i)$ where $A_{\pi(i)}$ is isomorphic to the subtree with root $r_{e_i}$. Note that $\pi(i)$ has the same parity as $n$ (as $r^*$ and $r_{e_i}$ both have blue child edges) and $\pi(i) < n$.

Recall that we've already played the game to the point where Right played distance $1$ from the root and the eventual connected component is empty. That is, following $\strat{S}$ we have shown that Left eventually plays and reaches a position of the form \[ \sum_{i} Q(A_i,e_i).\]

Recall 
\[ \sum_{i} A_{\pi(i)} \ge  A_{p(n)} \qquad \mbox{ and } \qquad k_i-j_i-1 \ge 0 .\] 

By induction \[ \sum_{1 \le i \le t} Q(A_{\pi(i)},e_i) = \sum_{i} A_{\pi(i)} + \sum_{i} (k_i-j_i-1) \ge A_{p(n)} + 0 \ge p(n).\]

We have shown that, when $n\ge 4$, Left playing second following $\strat{S}$ in $A_n$ moves to a position that is at least $p(n)$. That is, Right loses going first in $A_n - p(n)$.

\item[Left moving first]

Now we consider Left moving first on $A_n$. Up to symmetry, the case where $n$ is even is essentially the same as the case where Right goes first. That is, Left loses going first in $A_n - p(n)$.

Assume $n$ is odd. 
It remains to show that Left loses moving first on $A_n -1$.

We claim that Left loses moving first on $A_n - 1$ if Right follows \strat{S} playing on $A_n$.
 
After some even number of moves where Right follows the strategy $\strat{S}$, Left will play in an edge incident to the root.
Note that the tree of $A_n$ is a tree composed of a root vertex adjacent (via blue edges) to vertices that are roots of subtrees isomorphic to $-A_{\nu}$ for some odd $\nu$. 
Also, the responses dictated by $\strat{S}$ are local to these subtrees. Following $\strat{S}$ on $A_n$ is the the same as following $\strat{S}$ on each of these subtrees.
Thus each $-A_{\nu}$ for odd $\nu$ is winning for Right as the second player following $\strat{S}$ and thus Right playing second on $A_n$ if Left is not allowed to play adjacent to the root.
 
Let $t$ be the total number of Left moves including the Left move playing distance $0$ from the root if that happens. The number of Right responses in $A_n -1$ is also $t$.

That is, Left's last move is in the edge $e_{t}$ but Right will, when unable to follow $\strat{S}$, have one additional move in $-1$. Note we are following $\strat{S}$ playing on $A_n$ but we are extending that to have a strategy on the sum $A_n - 1$.

By Lemma \ref{thm:PAKstrong}, at the end of a play the order in which moves were played does not change which position is reached. This extends to $A_n -1$ because taking the disjoint union of a \pak position with a red edge is also a \pak\ position.

If Left started with the move distance $0$ from the root, that is, $t = 1$, then the remaining sum is exactly $-1$ and Right wins playing this to $0$.

We now argue that after Right's response to Left's move $e_t$ at distance at least $2$ from the root, Right wins the remaining sum.

Otherwise, we consider each of the $t$ pairs of moves (each  Right then Left) in a breadth-first manner. 

After each pair of moves, starting with Left deleting some $e_i$ (where $1 \le i \le t$), which is blue, and with Right following $\strat{S}$ and deleting a red edge, the value of the disconnected components changes by $Q(A_{\pi(i)},e_i)$ where $A_{\pi(i)}$ is isomorphic to the subtree with root $r_{e_i}$. Note that $\pi(i)$ has the same parity as $n$ (as $r^*$ and $r_{e_i}$ both have blue child edges) and $\pi(i) < n$.

Recall that we've already played the game to the point where Left played adjacent (distance $0$) from the root and there is no component connected to the root because there is no root. 

That is, by following $\strat{S}$  in $A_n$, Right eventually reaches a position of the form \[ \sum_{i} Q(A_i,e_i).\]

Recall 
\[ \sum_{i} A_{\pi(i)} \ge  A_{p(n)} \qquad \mbox{ and } \qquad k_i-j_i-1 \ge 0 .\] 

By induction \[ \sum_{1 \le i \le t} Q(A_{\pi(i)},e_i) = \sum_{i} A_{\pi(i)} + \sum_{i} (k_i-j_i-1) \ge A_{p(n)} + 0 \ge p(n).\]

We have shown that, when $n\ge 4$, Left playing second following $\strat{S}$ in $A_n$ moves to a position that is at least $p(n)$. That is, Left loses going first in $A_n - p(n)$.

\end{description}

Whereas Left and Right both lose going first in $A_n - p(n)$, then $A_n = p(n)$.

\end{proof}

The strategy $\strat{S}$ is defined in exactly the cases where there exists a winning strategy for the player.

\begin{Corollary}
	The strategy $\mathcal{S}$ is winning.
\end{Corollary}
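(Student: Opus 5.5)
The corollary should follow almost directly from the induction in the proof of \cref{thm:AnValue}. That proof did more than compute the value $p(n)$. In each case it exhibited play in which the designated player follows $\strat{S}$, and it showed that this play wins on $A_n - p(n)$ (or on $-A_n + p(n)$). So the plan is to read off, case by case, that $\strat{S}$ is exactly the strategy certifying each outcome claimed in its definition. Along the way I will check that every move $\strat{S}$ prescribes actually exists.

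\textbf{Step 1: $n$ even.} Here $\strat{S}$ is defined for the second player on $W = \pm A_n$. By \cref{thm:AnValue}, $A_n = 0$. The two cases of that proof (Right first, Left first), together with the colour-swap symmetry $W \leftrightarrow -W$, show that the second player following $\strat{S}$ always reaches a position $\sum_i Q(\cdot, e_i)$ of the correct sign. Each such summand is an integer with the required sign, namely $k_i - j_i - 1 + A_{\pi(i)} \ge 0$ for Left, and symmetrically for Right. A sum of integers is won by the player it favours, playing second at value $0$ or in their favour. Since every component is itself an $\pm A_i$ with $i<n$, the induction hypothesis lets that player keep using $\strat{S}$ there and win.

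\textbf{Step 2: $n$ odd.} $\strat{S}$ is defined for Left on $A_n$, either first or second. Playing second, Left wins by the Right-moving-first case, since $A_n = 1 > 0$. Playing first, $\strat{S}$ tells Left to delete an edge from the root to a copy of $-A_{n-1}$. This leaves the disjoint sum of the remaining subtrees $-A_i$ together with the children of the deleted vertex, which are copies of $A_1,\dots,A_{n-2}$ taken twice each. Using the identities $2\sum_{i\le m} A_i = m + A_m$ and $A_m + A_{m-1} = 1$ from the proof of \cref{thm:AnValue}, this sum should simplify to $0$. Every component is an even-index or odd-index $\pm A_i$ on which $\strat{S}$ is a second-player win by induction, so Left wins moving first. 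The case of Right on $-A_n$ follows by symmetry.

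\textbf{Step 3: well-definedness.} I must also check that the response rules (local reply, reply as if first in a remnant, reply adjacent to an untouched root) always have an available move. The paper's remark handles this: the two pendant-type edges at the root guarantee a move, and the strong placement property of \cref{thm:PAKstrong} makes the order of moves irrelevant.

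I expect the main obstacle to be Step 2, playing first. It requires verifying the algebraic simplification of Left's opening move, which the theorem's proof did not spell out. It also requires ensuring that the replies in the resulting sum compose: replying locally in one summand must never leave Left without a prescribed move. I would handle the composition by the same breadth-first pairing argument used in the theorem's proof, applied to the whole sum.
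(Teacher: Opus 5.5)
Your overall route (reading the corollary off the induction in the proof of Theorem~\ref{thm:AnValue}) is the paper's route. The paper adds only the remark that $\strat{S}$ is defined exactly where the relevant player has a win. Your computation in Step~2 is correct, and it even covers a case the theorem's proof skips (Right opening in the $-1$ of $A_n-1$). After Left deletes the edge from the root of $A_n$, $n$ odd, to a copy of $-A_{n-1}$, the position is $\sum_{i=1}^{n-2}2\bigl(A_i+(-A_i)\bigr)+(-A_{n-1})$, of value $-A_{n-1}=0$.

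The gap is your next sentence. It is false that $\strat{S}$ is a second-player win on each component. For odd $i$, $A_i=1$ and $-A_i=-1$ are not second-player wins, and on $-A_i$ the strategy $\strat{S}$ is defined only for Right. Such components are present here. For $n\ge 3$, the two copies of $-A_1$ that hung from the deleted root are now isolated red edges. Right can take one whenever he likes, and Left has no local reply. So no componentwise argument shows Left wins; the win comes from cancellation between components of opposite sign.

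The last sentence of Step~1 has the same flaw. Each $Q(\cdot,e_i)$ mixes copies of $A_i$ and $-A_i$. For example, in $A_4$, after Right opens inside a $-A_1$ child of the root and Left replies at the root, the unmatched pieces are an $A_1$ and a $-A_3$. So knowing the total is a nonnegative integer tells you Left has \emph{some} winning continuation, not that playing $\strat{S}$ separately in each component is one.

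For Step~2, the fix is to use your own decomposition. Left pairs each copy of $-A_i$ ($i\le n-2$) with a released copy of $A_i$. She answers a Right move in one member of a pair by the colour-reversed copy of that move in the other (Tweedledum--Tweedledee, consistent with rule~(3) of $\strat{S}$ for moves at a root). She uses $\strat{S}$ as second player only on the lone $-A_{n-1}$, whose index is even.

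For Step~1, you need the induction hypothesis in the form the theorem states it: the second player wins $\pm\bigl(A_m-p(m)\bigr)$ following $\strat{S}$, spending the integer summand when $\strat{S}$ has no local reply. You also need an argument that an unmatched odd $-A_{\ell-1}$ is offset by the Left-signed pieces of $Q$. Otherwise you can only conclude that a winning strategy exists wherever $\strat{S}$ is defined, which is all the values alone give.

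Step~3 does not supply this. The pendant-edge remark guarantees the \emph{opponent} a move in the root component, not that the reply prescribed by $\strat{S}$ exists. The ``if we can'' in rule~(1) fails exactly on these opponent-signed odd components.
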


\section{High Temperature Positions}\label{sec:High}

This section contains the main result of this paper.

\subsection{Hot Construction}

Theorem \ref{thm:AnValue} is sufficient to show that positions of the form $C_n$ (introduced in Section \ref{sec:A}) have high temperature, but we construct positions, called $T_n$, that have with twice the temperature with the same highest degree. These positions are symmetric (isomorphic to itself under the transformation of swapping all colors blue to red and vice-versa); unlike $C_n$, $T_n$ is its own negative.

\begin{Construction}\label{T}
	For any integer $n>1$ consider the \pak position, denoted $T_n$, constructed in the following way:
	\begin{itemize}
		\item Let $u$ be the root of a copy of $A_n$.
		\item Let $w$ be the root of a copy of $-A_n$.
		\item Add a vertex $v$ adjacent to $u$ via a red edge and adjacent to $w$ via a blue edge.
	\end{itemize}
\end{Construction}

See Figure \ref{fig:T3} for the construction of $T_3$. We will call the edge $uv$ Right's central edge and the edge $vw$ Left's central edge.

\begin{Theorem}\label{thm:Tn=pmn}
If $n \ge 0$, then the position $T_n$ has value $\pm n$.
\end{Theorem}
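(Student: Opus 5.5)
The plan is to prove $T_n=\{n\mid -n\}$, which is the meaning of $\pm n$, by computing the two central options exactly and then showing that every other option is dominated. The small cases ($n\le 2$, or however many are not covered by the construction's hypothesis $n>1$) I would check by direct computation, as in the base cases of Theorem~\ref{thm:AnValue}.

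\textbf{Step 1: the central options.} Left's central move on $vw$ deletes $v$ and $w$. It leaves the untouched copy of $A_n$ rooted at $u$, together with the children of $w$. Since $w$ is the root of $-A_n$, which hangs red edges to two copies of each $A_i$ for $1\le i\le n-1$, those children are exactly $2\sum_{i=1}^{n-1}A_i$. By Theorem~\ref{thm:AnValue}, Lemma~\ref{lem:mod2sum} and the identity $A_n+A_{n-1}=1$ (both used in the proof of Theorem~\ref{thm:AnValue}),
\[
A_n+2\sum_{i=1}^{n-1}A_i = A_n+(n-1)+A_{n-1} = n .
\]
By the colour symmetry of $T_n$, Right's central move on $uv$ has value $-n$. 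Hence $n$ is a Left option of $T_n$ and $-n$ is a Right option.

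\textbf{Step 2: domination.} It then suffices to show that every other Left option $G^L$ satisfies $G^L\le n$. Symmetry then gives $G^R\ge -n$ for every other Right option, so all non-central options are dominated and $T_n=\{n\mid -n\}$. To show $G^L\le n$, I would show that Left, moving first in $G^L-n$, loses. Right's strategy splits into cases according to where Left's first move in $T_n$ was made.
\begin{enumerate}
\item[(a)] Left played inside the $A_n$ part but not on an edge at $u$. Then Right's central edge $uv$ is still available, and Right should take it, leaving $-A_n$ plus the children of $u$ with Left's move already played in one of them.
\item[(b)] Left played on a blue edge at $u$, killing Right's central edge. Then $v$ is a blue pendant on $w$.
\item[(c)] Left played inside $-A_n$. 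Here Right again answers at $uv$ or locally.
\end{enumerate}
After this initial reply, Right would continue by following the strategy $\strat{S}$ on each component, as in the proof of Theorem~\ref{thm:AnValue}. The accounting is done with the same $Q(W,e)$ computation, using Lemma~\ref{thm:PAKstrong} to collect the played pairs in any order, so that the final position is a sum of integers whose total Right controls. The target in each case is a total $\le n$ against the $-n$ summand, with Right having the last move.

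\textbf{Main obstacle.} I expect case (a)/(c) to be the hard part: Left's move lands inside a summand $\pm A_i$ of value $0$ or $1$, and I need a quantitative bound on Left's gain. The bare fact that a Left option of a number is not $\ge$ it does not give the needed inequality. My first attempt would be to show directly, by the $Q(W,e)$ formula, that after Left's move $e$ and Right's $\strat{S}$-response the affected summand changes by $-(k-j-1)-A_{\ell-2}\le 0$ from Left's perspective. That would show Left's deviation from the centre gains nothing beyond the central value $n$.

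Case (b) is the other delicate point, because the pendant blue edge at $w$ gives Left an extra move that removes $w$. I would handle it by noting that this move is exactly Left's central-type move in the reduced tree. Its value would then be computed as in Step 1, with $u$'s remaining children in place of the intact copy of $A_n$, and would come out at most $n$.
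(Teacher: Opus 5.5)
Your overall frame matches the paper's. Computing the central options as $n$ and $-n$ (your Step~1 is correct) and then showing $G^L\le n$ for every non-central Left option is exactly what the paper's second-player win on $T_n+\pm n$ reduces to. After Left's non-central move, Right answers in the switch, and Left is then to move in $G^L-n$.

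The gap is in how you carry out Step~2. You correctly say Left moves \emph{first} in $G^L-n$. But the strategy you then describe has Right answering the move that created $G^L$ (``Right should take it, leaving $-A_n$ plus the children of $u$ with Left's move already played in one of them''). In $G^L-n$, Left gets a second consecutive move in $T_n$ before Right replies. A strategy in which Right replies at once only proves $G^L\not\ge n$, not $G^L\le n$.

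This extra tempo is the real difficulty; the paper flags it explicitly, noting that Left may play twice in a row. Its consequences are concrete:
\begin{itemize}
\item Left's second move may be $vw$, or a blue edge at $u$; either one removes $uv$.
\item Even when Right does get $uv$, two Left moves are unanswered. So the pairwise bookkeeping ``each Left move plus its $\strat{S}$-reply changes the value by $Q(W,e)\le 0$'' never closes.
\item Case~(b) has the same defect. Valuing Left's pendant move $vw$ covers only one Left continuation. (That value is $i\le n-1$. ``At most $n$'' would not be enough, because reaching exactly $0$ with Right to move loses for Right.) It says nothing about Right's reply when Left plays elsewhere. Also, $-A_n$ with a pendant blue edge is not a position on which $\strat{S}$ is defined.
\end{itemize}

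The paper closes this gap by splitting on Left's \emph{second} move:
\begin{itemize}
\item If it is $vw$, the position is a Left option of $T_n^{L_0}-n=0$, so Right wins moving first.
\item If it is a blue edge at $u$, Right plays the colour-mirrored red edge at $w$. This kills the pendant and leaves a Left follower of a sum of value $-n$.
\item Otherwise Right plays $uv$. This leaves a position reached by two Left moves from a sum of $W$'s of value $-2n$.
\end{itemize}
Only after this does Right answer component by component with $\strat{S}$. The surplus $-n$ or $-2n$ is what absorbs Left's unanswered moves. Your proposal needs these specific replies and this absorption step; without them, $G^L\le n$ is not established.
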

\begin{proof}
Note that $T_0 = \ast$.
For $n > 0$, we play the difference with $H= \pm n$ and show that the second player wins.

The position $T_n$ is symmetric about the central vertex $v$ and thus $T_n = -T_n$. Also, $H = -H$.
It suffices to argue only about Left moving first in this sum by symmetry.

That is, we play $T_n -\left( \pm n\right) = T_n + \pm n$. Hence we refer to playing a sum.

Left's deletion of her central edge results in a position, which we denote by $T_n^{L_0}$, that is a copy of $A_n$ along with two copies of each of $A_1, A_2, \ldots , A_{n-2}$.

The value of $T_n^{L_0}$ is $ \sum_{m=1}^{n-1} 2\left(A_m \right) + A_n = n-1 + A_{n-1} + A_n = n$,  which follows from Lemma \ref{lem:mod2sum} and Theorem \ref{thm:AnValue}.
Similarly, if Right removes his central edge, then what remains has value equal to $-n$.

If Left starts and plays $T_n \xrightarrow{L}T_n^{L_0}$ in the sum, then Right can respond in the $H$ component of the sum to $T_n^{L_0} -n = 0$ and win.
Similarly, first moves in $H$ have winning responses in $T_n$.
It remains to show that other first moves for Left, which are in $T_n$, are losing in the sum.
We show that if Left makes such a move that Right wins responding in the switch to $-n$.

Suppose Left moves first in the sum in some non-central blue edge and Right responds in the switch to $-n$. If Left's second move is in the central blue edge we reach a position that is a Left option of $T_n^{L_0} -n$. This is a Left option of $T_n^{L_0} - n$, a game with value $0$, so Right wins going first from this position.
If instead Left's second move is in a non-central blue edge, Right must respond.
To complete the proof we show that Right wins for any subsequent Left response. We need to consider three cases. 

The cases are described by moves played from $T_n + \pm n$. Cases are reached in multiple ways, but  permutations of  Right's moves and Left's moves reach the same position by Lemma \ref{thm:PAKstrong}. For justification, note that the graph $K_{1,n}$ in \snort, a strong placement game, has value $\pm n$; hence we can consider the value $\pm n$ as a strong placement game. Furthermore, a sum of strong placement games is a strong placement game \cite{FaridHN2019}.

\begin{enumerate}
\item 

Left plays some non-central blue edge in $T_n$, then Right plays in the switch $H$, and then Left plays her central edge.
This case is equivalent to when Left plays her central edge in $T_n$, then Right plays in the switch, and then Left plays some non-central edge. We proved above that Right wins going first from this position.

\item Left plays some non-central blue edge, then Right plays in the switch $H$, and then Left plays some blue edge non-incident with the central red (which is thus also non-central).

Right's winning response is to play his central edge. Heuristically, it is easy to see that Right wins: Left has played twice in components that are numbers in a sum equal to $-2n$ and thus loses. However, we need to be careful because Left may play twice in a row in a summand and the standard theory isn't particularly helpful here. We conclude the argument that Right wins below because it is the same argument shows that Right wins in the third case.

\item Left plays a blue edge, Right plays in the switch $H$, and then Left plays the blue edge incident with central red.

Right's winning response is to play mirroring Left's play that was incident to the central red edge. This move exists because the construction of $A_n$ has two isomorphic children. After this Right move it is Left's turn to move from a Left option of sum equal to $-n$.
\end{enumerate}

Each summands in these sums is an $A_n$ or a $-A_n$ (or a negative integer). 
Whenever Left makes a number of moves in a sum of $W$s equal to a negative number (plus possibly a negative integer in addition), Right wins. Component by component, Right can respond following the strategy $\strat{S}$ for any component in which Left has played. The difference from the original sum (which was negative) is some $Q(A,e)$ in Right's favour. Eventually, Left runs out of moves and reaches a sum at most equal to the negative sum we started with.

\end{proof}

\begin{Corollary}\label{cor:highTemp}
The ruleset \pak has unbounded temperature.
\end{Corollary}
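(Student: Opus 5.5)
Unbounded temperature follows directly from Theorem \ref{thm:Tn=pmn}. For each $n \ge 1$ the position $T_n$ of Construction \ref{T} is a legal \pak position, in fact a finite tree. Its game value equals the switch $\pm n = \{n \mid -n\}$. Temperature is an invariant of game value, so it suffices to compute the temperature of $\pm n$ and check that it grows without bound in $n$.

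To compute that temperature, I will use the standard thermographic calculation for a switch $\{a \mid b\}$ with $a > b$ numbers. Cooling by $t$ gives $\{a - t \mid b + t\}$. This stays a genuine switch, with left wall $a-t$ and right wall $b+t$, until the walls meet at $t = (a-b)/2$, at which point the mast value is $(a+b)/2$. Hence the temperature of $\{a\mid b\}$ is $(a-b)/2$.

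Taking $a = n$ and $b = -n$ gives temperature $n$ for $\pm n$, and therefore $t(T_n) = n$. Since $n$ is arbitrary, for every real bound $B$ there is a \pak position, namely $T_{\lceil B\rceil + 1}$, whose temperature exceeds $B$. So \pak has no finite boiling point, which answers Problem~\ref{prob:pak_boiling}(b).

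There is no real obstacle here; all the difficulty sits inside Theorem \ref{thm:Tn=pmn} (and, behind it, Theorem \ref{thm:AnValue}), which we take as given. The only points to state carefully are these:
\begin{itemize}
\item temperature depends only on the value, not on the form, so $T_n = \pm n$ suffices;
\item $n$ and $-n$ are integers, so no infinitesimal or tiny-cooling subtleties arise in the thermograph.
\end{itemize}

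I could also remark, as the paper does, that the $C_n$ positions already give unbounded temperature, roughly $n/2$. The $T_n$ family is used here because its bound is twice as large and the value is exact.
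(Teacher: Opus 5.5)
Your proposal is correct and follows the paper's own route. The paper states the corollary without a separate proof: it is read off directly from Theorem \ref{thm:Tn=pmn} ($T_n=\pm n$), since the switch $\pm n$ has temperature $n$, as the caption of Figure \ref{fig:T3} indicates.
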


\begin{figure}[h]

\begin{tikzpicture}[scale=1, every node/.style={draw, circle, minimum size=0.2cm, inner sep=1pt, thin}, every path/.style={line width=1pt}]
	\node (c0) at (0,0) {};
	\node (r0) at (0,1) {};
	\node (b0) at (0,-1) {};
	\draw[red] (c0) -- (r0);
	\draw[blue] (c0) -- (b0);
\end{tikzpicture} \hfill
\begin{tikzpicture}[scale=1, every node/.style={draw, circle, minimum size=0.2cm, inner sep=1pt, thin}, every path/.style={line width=1pt}]
	\node (c1) at (0,0) {};
	\node (r1) at (0,1) {};
	\node (b1) at (0,-1) {};
	\node (b11) at (0,2) {};
	\node (r11) at (0,-2) {};
	
	\draw[red] (c1) -- (r1);
	\draw[blue] (c1) -- (b1);
	\draw[blue] (r1) -- (b11);
	\draw[red] (b1) -- (r11);
\end{tikzpicture} \hfill
\begin{tikzpicture}[scale=1, every node/.style={draw, circle, minimum size=0.2cm, inner sep=1pt, thin}, every path/.style={line width=1pt}]
	\node (c2) at (0,0) {};
	\node (r2) at (0,1) {};
	\node (b2) at (0,-1) {};
	\node (b21) at (-.5,2) {};
	\node (b22) at (.5,2) {};
	\node (r21) at (-.5,-2) {};
	\node (r22) at (.5,-2) {};
	\node (r210) at (-.5,3) {};
	\node (r220) at (.5,3) {};
	\node (b210) at (-.5,-3) {};
	\node (b220) at (.5,-3) {};
	
	\draw[red] (c2) -- (r2);
	\draw[blue] (r2) -- (b21);
	\draw[blue] (r2) -- (b22);
	\draw[red] (b21) -- (r210);
	\draw[red] (b22) -- (r220);

	\draw[blue] (c2) -- (b2);
	\draw[red] (b2) -- (r21);
	\draw[red] (b2) -- (r22);
	\draw[blue] (r21) -- (b210);
	\draw[blue] (r22) -- (b220);	
\end{tikzpicture} \hfill
\begin{tikzpicture}[scale=.75, every node/.style={draw, circle, minimum size=0.2cm, inner sep=1pt, thin}, every path/.style={line width=1pt}]
	\node (c2) at (0,0) {};
	\node (r2) at (0,1) {};
	\node (b2) at (0,-1) {};
	\node (b21) at (-1,2) {};
	\node (b22) at (-.5,2) {};
	\node (r21) at (-1,-2) {};
	\node (r22) at (-.5,-2) {};
	\node (r210) at (-1,3) {};
	\node (r220) at (-.5,3) {};
	\node (b210) at (-1,-3) {};
	\node (b220) at (-.5,-3) {};

	\node (b31) at (2,2) {};
	\node (b32) at (1,2) {};
	\node (r31) at (2,-2) {};
	\node (r32) at (1,-2) {};
	\node (r310) at (2.25,3) {};
	\node (r311) at (1.75,3) {};	
	\node (r320) at (1.25,3) {};
	\node (r321) at (.75,3) {};	
	\node (b310) at (2.25,-3) {};
	\node (b311) at (1.75,-3) {};	
	\node (b320) at (1.25,-3) {};
	\node (b321) at (.75,-3) {};	
	\node (b400) at (.75,4) {};
	\node (b401) at (1.25,4) {};
	\node (b402) at (1.75,4) {};
	\node (b403) at (2.25,4) {};
	\node (r400) at (.75,-4) {};
	\node (r401) at (1.25,-4) {};
	\node (r402) at (1.75,-4) {};
	\node (r403) at (2.25,-4) {};		
	
	\draw[red] (c2) -- (r2);
	\draw[blue] (r2) -- (b21);
	\draw[blue] (r2) -- (b22);
	\draw[blue] (r2) -- (b31);
	\draw[blue] (r2) -- (b32);	
	\draw[red] (b21) -- (r210);
	\draw[red] (b22) -- (r220);
	\draw[red] (b31) -- (r310);
	\draw[red] (b31) -- (r311);
	\draw[red] (b32) -- (r320);
	\draw[red] (b32) -- (r321);

	\draw[blue] (c2) -- (b2);
	\draw[red] (b2) -- (r21);
	\draw[red] (b2) -- (r22);
	\draw[red] (b2) -- (r31);
	\draw[red] (b2) -- (r32);	
	\draw[blue] (r21) -- (b210);
	\draw[blue] (r31) -- (b310);
	\draw[blue] (r31) -- (b311);	
	\draw[blue] (r22) -- (b220);	
	\draw[blue] (r32) -- (b320);
	\draw[blue] (r32) -- (b321);	
	
	\draw[blue] (r310) -- (b403);
	\draw[blue] (r311) -- (b402);	
	\draw[blue] (r320) -- (b401);	
	\draw[blue] (r321) -- (b400);	
	\draw[red] (b310) -- (r403);
	\draw[red] (b311) -- (r402);	
	\draw[red] (b320) -- (r401);	
	\draw[red] (b321) -- (r400);		
\end{tikzpicture}
\caption{$T_n$ with temperature $n$ for $n= 0,1,2,\text{and } 3$}
\label{fig:T3}
\end{figure}

\begin{figure}
\centering

\scalebox{0.7}{
\begin{tikzpicture}
	[main node/.style={circle,draw=black,fill=white,text=black},
	main edge from parent/.style={draw,thick,draw=blue}]
	\node[main node] (root) {}
	child[main edge from parent] {node[main node] (mid) {}
	child[main edge from parent,red] {node[main node] (rootA) {}
	child[main edge from parent] {node[main node,xshift=-0.5cm] (a1) {$-A_1$}}
	child[main edge from parent] {node[main node] (a2) {$-A_1$}}
	child[draw=white] {node[] (x) {\ldots}}
	child[main edge from parent] {node[main node] (a3) {$-A_{n-1}$}}
	child[main edge from parent] {node[main node,xshift=0.5cm] (a4) {$-A_{n-1}$}}
}};

        \node[main node, yshift=1.3cm, xshift=-3.5cm, thick] (b1) at (root) {$A_1$};
        \draw[red,thick] (b1) -- (root);
        \node[main node, yshift=1.3cm, xshift=-2cm, thick] (b2) at (root) {$A_1$};
        \draw[red,thick] (b2) -- (root);
        \node[draw=white, yshift=1.3cm] (bx) at (root) {\ldots};
        \node[main node, yshift=1.3cm, xshift=1.7cm, thick] (b3) at (root) {$A_{n-1}$};
        \draw[red,thick] (b3) -- (root);
        \node[main node, yshift=1.3cm, xshift=3.5cm, thick] (b4) at (root) {$A_{n-1}$};
        \draw[red,thick] (b4) -- (root);
\end{tikzpicture}
}\hfill %
\scalebox{0.7}{
\begin{tikzpicture}
	[main node/.style={circle,draw=black,fill=white,text=black},
	main edge from parent/.style={draw,thick,draw=blue}]
	\node[] (root) {}
	child[white] {node[] (mid) {}
	child[white] {node[main node] (rootA) {}
	child[main edge from parent] {node[main node,xshift=-0.5cm] (a1) {$-A_1$}}
	child[main edge from parent] {node[main node] (a2) {$-A_1$}}
	child[draw=white] {node[] (x) {\ldots}}
	child[main edge from parent] {node[main node] (a3) {$-A_{n-1}$}}
	child[main edge from parent] {node[main node,xshift=0.5cm] (a4) {$-A_{n-1}$}}
}};

        \node[main node, yshift=1.25cm, xshift=-3.5cm] (b1) at (root) {$A_1$};
        \node[main node, yshift=1.25cm, xshift=-2cm] (b2) at (root) {$A_1$};
        \node[draw=white, yshift=1.25cm] (bx) at (root) {\ldots};
        \node[main node, yshift=1.25cm, xshift=2cm] (b3) at (root) {$A_{n-1}$};
        \node[main node, yshift=1.25cm, xshift=3.5cm] (b4) at (root) {$A_{n-1}$};
\end{tikzpicture}
}
\caption{$T_n$ and Left's option $T_n^L = A_n + 2\sum_{i=1}^{n-1}A_i$}
\end{figure}

\section{Conclusion}\label{sec:Conclusion}

The positions $T_n$ for $n \ge 3$ have temperature at least $3$, which exceeds the greatest known temperature of a \dom position.

The $A_n$ construction gives infinitely-many non-trivial and asymmetric zero positions.

\subsection{Future Work}

Our work shows that when no grid structure is required at all, then there is no bound to the temperature.
Hence, if the boiling point of \dom is finite, it is in some way because of the grid structure of a \dom board. We can restrict the \pak positions considered to those closer to a \dom position though and it is yet unknown whether those families would have a finite boiling point. Two families to consider are:
\begin{itemize}
\item \textbf{Degree restrictions}: Each vertex in a \pak position equivalent to a \dom position has degree at most $4$, with at most $2$ blue edges and at most $2$ red edges. The $T_n$ family has unbounded maximum degree, so a natural restriction would be to consider only those \pak positions with a bounded maximum degree and/or those where the number of edges of one color incident with each vertex is bounded.
\item \textbf{Grid structure}: We can also consider the family of \pak positions which can be embedded in a Cartesian grid, but relaxing the \dom condition of having vertical edges blue and horizontal edges red. Since we are restricting to graphs with maximum degree at most $4$, these positions are a sub-family of the previous family. Thus if the temperature is still found to be unbounded (or have a constant bound larger than $2$) when restricting the degree, this family would be the next obvious one to consider.
\end{itemize}

Although not entirely motivated by the question of the boiling point of \dom, it would also be interesting to consider whether the sparseness, tree-structure, or alternating coloring of the graphs $T_n$ could be affecting the temperature. In particular, can we still have unbounded temperature in dense graphs or in graphs of large girth? Are there other families of trees with alternating coloring and high degree where the temperature grows more quickly compared to the size of the board?

\end{document}